\theoremstyle{plain}
\newtheorem{theorem}{Theorem}
\newtheorem{proposition}[theorem]{Proposition}
\newtheorem{lemma}{Lemma}
\newtheorem{conjecture}{Conjecture}
\theoremstyle{remark}
\theoremstyle{definition}
\newcommand{\eps}{\varepsilon}
\newcommand{\lr}[1][]{\xleftrightarrow{#1}}
\newcommand{\abs}[1]{\left\lvert#1\right\rvert}
\title{Counting minimal cutsets and $p_c<1$}
\author{Philip Easo, Franco Severo, Vincent Tassion}
\date{}
\begin{document}

\begin{abstract}
    We prove two results concerning percolation on general graphs.
    \begin{itemize}[leftmargin=*, topsep=0.5em, itemsep=0.5em]
        \item We establish the converse of the classical Peierls argument: if the critical parameter for (uniform) percolation satisfies $p_c<1$, then the number of minimal cutsets of size $n$ separating a given vertex from infinity is bounded above exponentially in $n$. This resolves a conjecture of Babson and Benjamini from 1999.
        \item We prove that $p_c<1$ for every uniformly transient graph. This solves a problem raised by Duminil-Copin, Goswami, Raoufi, Severo and Yadin, and provides a new proof that $p_c<1$ for every transitive graph of superlinear growth.
    \end{itemize} 
\end{abstract}

\maketitle

\section{Main results}
\label{sec:main_res}

Let $G=(V,E)$ be an infinite, connected, locally finite graph. A set of edges $F\subset E$ is called a \emph{cutset} from a vertex $v$ to $\infty$ if $v$ belongs to a finite connected component of $(V,E\setminus F)$.
A cutset is called \emph{minimal} if no proper subset of it is a cutset. Let $\mathcal{Q}_n(v)$ be the set of minimal cutsets from $v$ to $\infty$ of cardinality $n$ and consider the  quantity
\begin{equation}
    q_n:=\sup_{v\in V}|\mathcal{Q}_n(v)|.
\end{equation}
Here $\abs{\emptyset}:=0$. We emphasize that $q_n=\infty$ is possible, for example for $G=\mathbb Z$ and $n=2$.  In this paper, we are interested in cases where the number of cutsets $q_n$ grows at most exponentially with $n$, and we define
\begin{equation}
  \label{eq:2}
  \kappa(G):=\sup_{n\ge1} q_n^{1/n}.
\end{equation}

Let $\mathrm P_p$ denote (Bernoulli bond) percolation of parameter $p \in [0,1]$ on $G$, where each edge is open with probability $p$ independently of the other edges. 
Consider the percolation probabilities 
$\theta_v(p):=\mathrm P_p(v\leftrightarrow \infty)$,
where $v\leftrightarrow \infty$ denotes the event that $v$ belongs to an infinite open connected component. 
We define the critical parameter for \emph{uniform} percolation as
 \begin{equation}
	p_c^*(G):=\inf\{p\in [0,1]:  \theta^*(p)>0\}, 
\end{equation}
where $\theta^*(p):=\inf_{v\in V} \theta_v(p)$.

By the classical Peierls argument \cite{Pei36} if $\kappa(G)<\infty$, then percolation on $G$ has a uniformly percolating phase in the sense that $p_c^*(G)<1$. Our first theorem establishes the converse.

\begin{theorem}\label{thm:perco->cutset}
For every infinite, connected, locally finite graph $G$ we have
  \begin{equation}
    p_c^*(G)<1 \iff \kappa(G)<\infty.
\end{equation}
\end{theorem}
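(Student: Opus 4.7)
The implication $\kappa(G)<\infty \Rightarrow p_c(G)<1$ will follow from the classical Peierls argument: for $p$ close to $1$ with $\sum_n q_n(1-p)^n<1$, on the event $\{v\not\leftrightarrow\infty\}$ at least one minimal cutset from $v$ consists of closed edges (any closed cutset contains a minimal one), so a union bound yields $\sup_v \mathrm P_p(v\not\leftrightarrow\infty)<1$. For the converse, I would fix $p\in(p_c(G),1)$ and set $\delta:=\inf_v\mathrm P_p(v\leftrightarrow\infty)>0$; the goal becomes $q_n \leq C^n$ for some $C=C(p,\delta)$.

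The first move is to reformulate $\mathcal Q_n(v)$ in terms of \emph{nice} sets: $S \ni v$ finite and connected, with every component of $G\setminus S$ infinite. The map $F \mapsto S=\{$component of $v$ in $(V,E\setminus F)\}$ gives a bijection between $\mathcal Q_n(v)$ and nice sets with $|\partial_E S|=n$; niceness is equivalent to minimality of $F$, as seen by examining what happens when a single edge is removed from $F$. Next, I would construct a canonical map on $\{v\not\leftrightarrow\infty\}$ by
\[
S^*(\omega):=C(v,\omega)\;\cup\;\bigcup\bigl\{\text{finite components of }G\setminus C(v,\omega)\bigr\},
\]
where $C(v,\omega)$ is the open cluster of $v$. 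Since $C(v,\omega)$ is finite and $G$ is locally finite, $G\setminus C(v,\omega)$ has only finitely many components (each touching the finite vertex-boundary of $C(v,\omega)$), and $S^*(\omega)$ is a finite nice set containing $v$. The events $\{S^*(\omega)=S\}$ are then pairwise disjoint subsets of $\{v\not\leftrightarrow\infty\}$.

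The heart of the argument is to prove a uniform lower bound $\mathrm P_p(S^*(\omega)=S)\geq c^n$ for some $c=c(p,\delta)>0$, over nice $S$ with $|\partial_E S|=n$. Given this, summing over such $S$ gives
\[
q_n\, c^n\;\leq\;\sum_{|\partial_E S|=n}\mathrm P_p\bigl(S^*(\omega)=S\bigr)\;\leq\;\mathrm P_p(v\not\leftrightarrow\infty)\;\leq\;1-\delta,
\]
so $q_n\leq(1-\delta)c^{-n}$ and $\kappa(G)\leq c^{-1}<\infty$.

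The hard step is exactly this lower bound, and I expect it to be the central difficulty of the paper. The most obvious sufficient condition for $S^*(\omega)=S$ is $C(v,\omega)=S$, which by a spanning-tree argument has probability at least $(1-p)^n p^{|V(S)|-1}$ --- hopeless, since $|V(S)|$ is not controlled by $|\partial_E S|$ (a ball in $\mathbb Z^d$ for $d \geq 2$ has enormous interior compared to its boundary). The sufficient event must therefore be replaced by one that ``costs'' only $O(n)$: roughly, $\partial_E S$ closed together with $v$'s cluster reaching the inner boundary $\partial_V S$ (which has $\leq n$ vertices), coupled with a mechanism ensuring that the finite-components completion lands exactly on $S\setminus C(v,\omega)$ rather than leaking outside. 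FKG should decouple inside and outside of $\partial_E S$, and one would hope to use $\delta$ on the outside. The trouble is that the infinite components of $G\setminus S$ need not themselves be supercritical for percolation at $p$, and Steiner skeletons inside $S$ connecting $v$ to $\partial_V S$ need not have $O(n)$ edges. Circumventing these obstacles --- perhaps through a multiscale exploration, a renormalization/bootstrap on $n$, or a more intrinsic probabilistic encoding of nice sets --- is where I expect the conceptual novelty of the paper to lie.
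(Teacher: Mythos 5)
Your overall framework is the paper's own: the easy Peierls direction, and for the converse a reduction to a uniform lower bound $\mathrm P_p(S^*(\omega)=S)\geq c^{n}$ over sets $S$ with boundary size $n$, followed by the disjointness/summation step. In the paper's language $\{S^*(\omega)=S\}$ is exactly the event $\{\partial_\infty C_o=\Pi\}$ where $\Pi$ is the minimal cutset associated to $S$, and the bijection you describe between minimal cutsets and your ``nice'' sets is the content of Lemmas~\ref{lem:1} and \ref{lem:2}. You also correctly diagnose why the naive spanning-tree bound fails (it costs $p^{|S|}$ rather than $p^{O(n)}$).

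The genuine gap is precisely the lower bound you flag as ``the central difficulty,'' and it is not filled by multiscale exploration or renormalisation: the paper isolates a self-contained FKG statement, Proposition~\ref{prop:fkg}. It says that on a finite graph $A$ with a positively associated percolation measure, if $\mathrm P(u\leftrightarrow B)\geq\theta$ for \emph{every} vertex $u$ and $\mathrm P(e\text{ open})\geq p$ for every edge, then for any root $o$,
\[
\mathrm P\Bigl(\,\bigcap_{b\in B}\{o\leftrightarrow b\}\Bigr)\;\geq\;\bigl(p\theta/2\bigr)^{(3/\theta)\,|B|}.
\]
This is applied with $A$ the component of $o$ cut out by $\Pi$ and $B$ the inner endpoints of $\Pi$; the hypothesis $\mathrm P(u\stackrel{A}{\leftrightarrow}B)\geq\theta$ holds for every $u\in A$ because any infinite open path from $u$ must hit $B$ before leaving $A$. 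The proof of the proposition is the missing idea: one builds a maximal ``chained'' sequence $x_1=o,x_2,\dots,x_k$ in $A$ with $p\theta/2\leq\mathrm P(x_i\leftrightarrow\{x_1,\dots,x_{i-1}\})\leq\theta/2$ at every step. Maximality plus FKG forces $\mathrm P(u\leftrightarrow\{x_1,\dots,x_k\})\geq\theta/2$ for all $u\in A$, and a counting argument (tracking the expected number of distinct clusters meeting both the chain-so-far and $B$, which cannot exceed $|B|$) gives $k\leq 2|B|/\theta$. FKG then chains $o$ to all the $x_i$ at cost $(p\theta/2)^{k-1}$ and each $b\in B$ to the chain at cost $(\theta/2)^{|B|}$, producing the stated bound. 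Combined with the independent event that all of $\Pi$ is closed, this yields $\mathrm P_p(\partial_\infty C_o=\Pi)\geq c^{n}(1-p)^{n}$, completing the argument in the way you anticipated but could not carry out.
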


Currently, the geometric condition $\kappa(G)<\infty$ is not well understood. Our second result gives a sufficient condition based on the simple random walk. Given a vertex $v$, let $\mathbb P_v$ be the law of a simple random walk $(X_t)_{t=0}^{\infty}$ on $G$ starting at $v$. We say that $G$ is uniformly transient if
\begin{equation}
  \label{eq:3}
  \inf_{v\in V}\left[d_v \cdot \mathbb P_v(\forall  t\ge 1: \ X_t\neq v)\right]>0,
\end{equation}
where $d_v$ denotes the degree of $v$.

\begin{theorem}\label{thm:transient->cutset}
 Let $G$ be an infinite, connected, locally finite graph.  If $G$ is uniformly transient, then $\kappa(G)<\infty$. 
\end{theorem}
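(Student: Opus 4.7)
By Theorem~\ref{thm:perco->cutset}, it suffices to show that uniform transience implies $p_c(G) < 1$. The plan is to convert the random walk hypothesis into a percolating phase, and invoke Theorem~\ref{thm:perco->cutset} as a black box.

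I would first reformulate the hypothesis in analytic terms. Using the standard identity $\mathbb P_v(\forall t\ge 1 : X_t \ne v) = 1/\mathcal G(v,v)$, where $\mathcal G(v,v) = \sum_{t \ge 0} \mathbb P_v(X_t = v)$ is the Green's function of the simple random walk, the condition \eqref{eq:3} becomes
\[ \sup_{v\in V}\ \frac{\mathcal G(v,v)}{d_v}\ <\ \infty. \]
Equivalently, the effective resistance from every vertex to infinity in the unit-conductance electrical network on $G$ is uniformly bounded, and by Thomson's principle this provides a unit flow from each vertex $v$ to infinity of energy bounded by a universal constant.

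To produce a percolating phase, I would employ the Gaussian free field (GFF) machinery of Duminil-Copin, Goswami, Raoufi, Severo and Yadin. The GFF $\phi$ on $G$ has variance $\mathcal G(v,v)$ at $v$, which is uniformly bounded under our hypothesis; hence $\mathbb P(\phi(v) \ge -h)$ tends to $1$ as $h \to \infty$, uniformly in $v$. Once one establishes that for some finite $h$ the level set $\{v : \phi(v) \ge -h\}$ contains an infinite cluster, a sprinkling/stochastic-domination argument should transfer this into percolation of Bernoulli bond percolation at some $p<1$, giving $p_c(G)<1$ and hence Theorem~\ref{thm:transient->cutset}.

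The main obstacle is establishing GFF level set percolation from uniform transience alone. The DGRSY framework runs a multiscale renormalization fueled by polynomial-growth estimates on capacities of balls, and such estimates are not automatic from a pointwise bound on $\mathcal G(v,v)/d_v$. The crux will be to replace these growth inputs by consequences of the single global bound on $R_{\mathrm{eff}}$: for instance, by extracting from the bounded-energy unit flow a collection of many, nearly edge-disjoint escape routes from each vertex, and feeding this geometric information into a renormalization scheme that couples the GFF (or directly Bernoulli percolation) to a non-trivial percolating structure at sufficiently large scales.
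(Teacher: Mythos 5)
There is a genuine gap, and it is precisely the one you flag at the end: the DGRSY renormalization does not run on the bare hypothesis of uniform transience. Their machinery needs $\operatorname{Dim}(G)>4$ (or comparable multi-scale capacity/volume estimates), which is strictly stronger than a uniform pointwise bound on $\mathcal G(v,v)/d_v$. Your last paragraph correctly identifies this as the crux but does not resolve it; ``extracting many nearly edge-disjoint escape routes and feeding this into a renormalization scheme'' is the entire missing content. In fact, getting $p_c<1$ from uniform transience alone \emph{is} the theorem, and no existing renormalization input covers it -- this is exactly \cite[Problem 1.4]{DGRSY20}.

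The paper also does not take the detour you propose (uniform transience $\Rightarrow p_c<1$, then invoke Theorem~\ref{thm:perco->cutset}). Instead it proves $\kappa<\infty$ directly by a ``reverse Peierls'' scheme: for each minimal cutset $\Pi$ of size $n$, one constructs a probabilistic event of probability at least $K^{-n}$ that \emph{selects} $\Pi$, and since these events are disjoint over $\Pi\in\mathcal Q_n(o)$, one gets $|\mathcal Q_n(o)|\le K^n$. In the main proof, the event is that the set of vertices visited by a simple random walk (on the subdivided graph) before its last return to the start has exposed boundary exactly $\Pi$; the exponential lower bound comes from a new covering lemma (Lemma~\ref{lem:covering}) for killed Markov chains. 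In the alternative GFF proof (which is the one closest in spirit to yours), the event is that the cluster of $o$ in the excursion set $\{\varphi\ge 0\}$ has exposed boundary $\tilde\Pi$; the key input is that uniform transience bounds $\mathrm{Var}(\varphi(x))$ uniformly, and then Proposition~\ref{prop:fkg} applied to the positively associated GFF delivers the $c^n$ bound directly -- no renormalization, no sprinkling, no detour through $p_c<1$. Your opening observations (Green's function/effective resistance reformulation, bounded GFF variance) are correct and do feed into the paper's GFF proof, but the route from there must be the direct FKG-based cutset-selection argument, not the DGRSY renormalization.
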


\section{Consequences and comments}
\label{sec:cons-comm}

In this section, all graphs are assumed to be infinite, connected, and locally finite. Given a set of vertices $S$ in a graph $G=(V,E)$, we define the boundary $\partial S$ to be the set of all edges $\{u,v\} \in E$ such that $u \in S$ but $v \not\in S$, and we define the weight $\abs{S}_G := \sum_{u \in S} d_u$. The \emph{isoperimetric dimension} of $G$ is given by
\[
        \operatorname{Dim}(G) := \sup\left\{ d \geq 1 : \inf_{ \substack{ S \subseteq V \\ 0 < \abs{S} < \infty  } }  \frac{ \abs{\partial S} } { \abs{S}_G^{\frac{d-1}{d}} } > 0 \right\}.
\]

\begin{enumerate}[leftmargin=*, topsep=1.2em, itemsep=1.2em, label=\arabic*.]

\item We remark that the uniform critical parameter $p^*_c(G)$ slightly differs from the most classical (non-uniform) one given by $p_c(G):=\inf\{p\in [0,1]:  \theta(p)>0\}$, where $\theta(p):=\sup_{v\in V} \theta_v(p)$. However, these notions often coincide, such as for (quasi-)transitive graphs.
See the introduction of \cite{DGRSY20} for a survey of the rich history of the ``$p_c < 1$'' question and its place in statistical mechanics. Let us just recall that all of the results about percolation here can be translated into analogous statements about many other models, most notably the Ising model.

\item Duminil-Copin, Goswami, Raoufi, Severo, and Yadin proved that every quasi-transitive graph of superlinear growth satisfies $p_c < 1$ \cite{DGRSY20}. This had previously been a long-standing conjecture of Benjamini and Schramm \cite{BenSch96}. In fact, the authors of \cite{DGRSY20} established that $p^*_c<1$ for every (not necessarily transitive) bounded degree graph $G$ satisfying $\operatorname{Dim}(G) > 4$, and this was known to imply the conjecture about transitive graphs by the classical works of Gromov \cite{gromov1981groups} and Trofimov \cite{Tro84}.

\item Theorem \ref{thm:transient->cutset} establishes that $p^*_c < 1$ for every graph $G$ satisfying $\operatorname{Dim}(G) > 2$, since such graphs are uniformly transient (see e.g.~\cite[Theorem 6.41]{LP16}). We therefore obtain stronger results than \cite{DGRSY20}, through a completely new proof. Theorem \ref{thm:transient->cutset} fully realises the idea at the heart of \cite{DGRSY20} to exploit the transience of a simple random walk to prove $p_c < 1$. In particular, we resolve \cite[Problem 1.4]{DGRSY20}.

\item Our proofs of Theorems 1 and 2 can also be run on \emph{finite} graphs to establish the analogous results about \emph{giant} clusters. (See \cite{HT21} for background.) In this setting, to define $q_n$, one should instead count the number of minimal cutsets of cardinality $n$ from a vertex $v$ to another vertex $u$ (and take the supremum over all choices for distinct $u$ and $v$). The corresponding notion of uniform transience for a given family of finite graphs is that there exists a constant $C< \infty$ such that every graph $G=(V,E)$ in the family satisfies
\[
    \max_{u,v \in V} \mathcal R_G(u,v) \leq C,
\]
where $\mathcal R_G(u,v)$ denotes the effective resistance from $u$ to $v$ in the graph $G$.

\item Babson and Benjamini conjectured that $\kappa < \infty$ for every transitive\footnote{In fact, Babson and Benjamini originally made this conjecture in the case of Cayley graphs, and Benjamini later extended this conjecture to allow arbitrary transitive graphs.} graph of superlinear growth \cite{babson1999cut}. Notice that this purely geometric conjecture is a priori stronger than the above $p_c < 1$ conjecture of Benjamini and Schramm. Babson and Benjamini verified their conjecture in the special case of Cayley graphs of finitely presented groups by establishing that minimal cutsets in such graphs are coarsely connected. By \cite{timar2007cutsets,gromov1981groups,Tro84} (see also \cite[Lemma 2.1]{CMT21}), this extends to all transitive graphs satisfying $\operatorname{Dim}(G) < \infty$. Given these results, it suffices to show that $\kappa < \infty$ for every transitive graph satisfying $\operatorname{Dim}(G) = \infty$.
Theorem \ref{thm:transient->cutset} therefore resolves the $\kappa < \infty$ conjecture of Babson and Benjamini. (Alternatively, taking the results of \cite{DGRSY20} for granted, this conjecture follows from Theorem \ref{thm:perco->cutset}).

\item We establish the existence of a universal constant $\eps > 0$ such that every transitive graph $G$ satisfies $p_c = 1$ or $p_c \leq 1 - \eps$. When $G$ is recurrent, this follows from the proof of \cite[Theorem 1.7]{HT21}, and when $G$ is transient, this follows from our proof of Theorem \ref{thm:transient->cutset} because there exists a universal constant $c > 0$ such that a simple random walk in any transient transitive graph has probability at least $c$ never to return to where it started \cite[Corollary 1.3]{TT20}. Previous works had established this result if $\eps$ is allowed to depend on the degree of vertices in $G$ \cite[Theorem 1.7]{HT21}, or if we instead consider site percolation on a Cayley graph \cite{PS23,LMTT23}.
By the proof of Theorem~\ref{thm:perco->cutset}, we also obtain a universal constant $K<\infty$ such that $\kappa<K$ for every transitive graph of superlinear growth.

\item Much work has been motivated by a desire to find a sharp geometric criterion for a graph $G$ to satisfy ${p_c}<1$. Indeed, a well-known open conjecture of Benjamini and Schramm is that every (not necessarily transitive) graph $G$ with $\operatorname{Dim}(G) > 1$ satisfies ${p_c}<1$  \cite{BenSch96}. We were very surprised to find that the geometric criterion $\kappa < \infty$ (which is arguably simpler and more natural than the isoperimetric criterion) is not just sharp but \emph{exact}. Nevertheless, in light of Theorem \ref{thm:perco->cutset} and this conjecture of Benjamini and Schramm, we encourage the reader to investigate the following:
    \begin{conjecture}
        Every graph $G$ with $\operatorname{Dim}(G)>1$ satisfies $\kappa < \infty$. 
    \end{conjecture}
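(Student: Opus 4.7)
The plan is to attack the equivalent form of the conjecture given by Theorem \ref{thm:perco->cutset}, namely that $\operatorname{Dim}(G) > 1$ implies $p_c(G) < 1$ (this is the long-standing Benjamini--Schramm conjecture). I would then split on the value of $\operatorname{Dim}(G)$.

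In the range $\operatorname{Dim}(G) > 2$, the conjecture follows directly from Theorem \ref{thm:transient->cutset}: a standard Varopoulos-type argument shows that a $d$-isoperimetric inequality with $d > 2$ yields heat-kernel decay $p_n(v,v) \le C n^{-d/2}$, and combined with the degree-weighted definition of $\operatorname{Dim}$ this should upgrade to uniform transience, at which point Theorem \ref{thm:transient->cutset} gives $\kappa(G) < \infty$.

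The regime $\operatorname{Dim}(G) \in (1,2]$ is where the real difficulty lies, because the simple random walk on $G$ is typically recurrent and Theorem \ref{thm:transient->cutset} cannot be applied directly. My first attempt would be a product trick: consider $G' := G \times \mathbb{Z}^k$ for $k$ large. Isoperimetric dimension is additive under products, so $\operatorname{Dim}(G') > 2$, and the previous step yields $\kappa(G') < \infty$, hence $p_c(G') < 1$ by the easy direction of Theorem \ref{thm:perco->cutset}. One would then try to transfer this back to $G$, either by projecting minimal cutsets from $G'$ down onto $G$ with a combinatorial bound on the multiplicities, or by adapting a Grimmett--Marstrand-type slab renormalisation to show $p_c(G \times \mathbb{Z}^k) < 1 \Rightarrow p_c(G) < 1$. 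Theorem \ref{thm:perco->cutset} would then close the loop.

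The main obstacle is precisely this transfer step, especially in the borderline case $\operatorname{Dim}(G) = 2$ with $\mathbb{Z}^2$-like random walk behaviour. A minimal cutset in $G \times \mathbb{Z}^k$ need not project to a minimal cutset in $G$, and a cutset in $G$ admits many lifts, so the naive combinatorial route is delicate; meanwhile, standard slab renormalisations lean on quasi-transitive structure that the general setting lacks. For the dimension-two case one would ultimately need to replace uniform transience by a strictly weaker substitute -- perhaps a quantitative capacity estimate for small sets, or planar duality in the planar case -- and finding such a substitute appears to require a genuinely new idea, consistent with the fact that the Benjamini--Schramm conjecture has been open for nearly three decades.
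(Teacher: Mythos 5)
This statement is presented in the paper as an open \emph{conjecture}, not a theorem; the paper offers no proof of it, and indeed explicitly flags it as a problem for the reader. So there is no ``paper's own proof'' to compare against, and your proposal is, as you yourself concede, not a proof either.

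That said, your analysis is sound as far as it goes, and it aligns with what the paper actually establishes. The case $\operatorname{Dim}(G) > 2$ does follow: an isoperimetric inequality of dimension $d>2$ (in the degree-weighted sense used here) yields, via Nash/Varopoulos-type estimates, heat-kernel decay $p_t(v,v) \le C t^{-d/2}$ uniformly in $v$, and summing over $t$ gives a uniform upper bound on the Green's function $\mathbb{E}_v[\ell_v]$, which is equivalent to uniform transience in the sense of \eqref{eq:3}; Theorem~\ref{thm:transient->cutset} then gives $\kappa(G) < \infty$. This is precisely item~3 of the paper's Consequences section, and it is the content the paper delivers. The residual regime $\operatorname{Dim}(G) \in (1,2]$ is exactly the open part of the Benjamini--Schramm conjecture, which Theorem~\ref{thm:perco->cutset} shows to be \emph{equivalent} to the conjecture under review, so Theorem~\ref{thm:perco->cutset} reformulates the problem rather than shrinking it.

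Your product trick has a gap you should be wary of beyond the one you name. The direction you need, $p_c(G\times\mathbb{Z}^k) < 1 \Rightarrow p_c(G) < 1$, is false without further hypotheses: $G = \mathbb{Z}$ has $p_c(\mathbb{Z}\times\mathbb{Z}^k) < 1$ but $p_c(\mathbb{Z}) = 1$. So any transfer argument must use the extra hypothesis $\operatorname{Dim}(G) > 1$ in an essential way, which defeats the point of decoupling the dimension via a product. Grimmett--Marstrand reasoning also runs in the opposite direction (from $p_c(\mathbb{Z}^d) < 1$ to percolation in slabs), not from slabs back to the base graph. Your conclusion that a ``genuinely new idea'' is required in the regime $\operatorname{Dim}(G) \in (1,2]$ is correct, and it matches the paper's own framing of this as an open problem.
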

        
\end{enumerate}

The Peierls argument can be used to deduce results that are (a priori) much stronger than $p_c <1$. To explore these, it helps to consider the \emph{isoperimetric profile} $\psi$ of a graph $G=(V,E)$, given by
\[
        \psi(n) := \inf_{ \substack{ S \subseteq V \\ n \leq \abs{S}_G < \infty } }  \abs{\partial S}.
\]

\begin{enumerate}[leftmargin=*, topsep=1.2em, itemsep=1.2em,label =\arabic*.]
  \setcounter{enumi}{7}

\item Every graph $G=(V,E)$ satisfying $\kappa < \infty$ admits a \emph{strongly percolating} phase in the sense that for all $p \in (1-1/\kappa,1]$, there is a constant $c > 0$ such that
        \begin{equation} \begin{split}
                \mathbb P_p(S \not\leftrightarrow \infty) \leq e^{-c \psi(|S|)} \qquad &\text{for every finite set $S \subseteq V$};\\
                \mathbb P_p( n \leq |C_v| < \infty ) \leq e^{-c \psi(n) } \qquad &\text{for every $n \geq 1$ and $v \in V$}.
        \end{split}\end{equation}
        Thus our work resolves \cite[Problem 1.6]{DGRSY20} and implies that percolation on every transitive graph of superlinear growth has a strongly percolating phase. It remains an important open problem to establish that on these graphs, such bounds hold for \emph{all} $p \in (p_c,1]$. Indeed, this is the ``upper bound'' half of \cite[Conjecture 5.1]{HH19}.

\item Conversely, our proof of Theorem~\ref{thm:perco->cutset} (more precisely, Proposition~\ref{prop:fkg}) can be used to show that for every transitive graph $G=(V,E)$ and for every $p > p_c$, there is a constant $c > 0$ such that
        \[
                 \mathbb P_p( n \leq |C_v| < \infty ) \geq e^{-c \psi(n) } \qquad \text{for every $n \geq 1$ and $v \in V$}.
        \]
This establishes the ``lower bound'' half of \cite[Conjecture 5.1]{HH19}.

\item A major motivation for studying \emph{anchored} isoperimetric inequalities for graphs and manifolds is the belief that - unlike \emph{(uniform)} isoperimetric inequalities - anchored inequalities should typically be robust under small perturbations of the space \cite[Section 6]{BLS99}. We obtain the following concrete statement to this effect by combining Theorem \ref{thm:perco->cutset} with an argument of Pete \cite[Theorem 4.1]{Pet08b}: for every graph $G$ satisfying $p^*_c(G)<1$, there exists $\eps > 0$ such that if $G$ satisfies a $d$-dimensional anchored isoperimetric inequality for any $d \geq 1$ (or $f$-anchored isoperimetric inequality for any function $f$) then so does every infinite cluster formed by percolation of parameter $1-\eps$.

\item By combining the previous item with Theorem 2 and results of Thomassen \cite{Thom92} and Pemantle and Peres \cite{MR1395617}, we deduce that for every graph $G=(V,E)$ with $\operatorname{Dim}(G) > 2$, and for every probability measure $\mu$ on $(0,\infty)$, the random weighted network $(V,C)$ with $C = (C(e) : e \in E) \sim \mu^{\otimes E}$ is almost surely transient. (This was previously known if $\operatorname{Dim}(G) > 4$ \cite{Hut23}.) 

\item A standard analysis of \emph{Karger's algorithm} from computer science establishes that every finite graph $G=(V,E)$ with exactly $n$ vertices contains at most $n \choose 2$ \emph{minimum cuts}, i.e.\! sets of edges $F$ such that $(V,E\backslash F)$ is disconnected but there is no set of edges $F'$ with $|F'| < |F|$ such that $(V,E \backslash F')$ is also disconnected. In the same spirit, in the present paper, we design randomized algorithms to instead count \emph{minimal cutsets}. 

\end{enumerate}

\textbf{Acknowledgements: } We are very grateful to Benny Sudakov for telling us about Karger's algorithm from computer science. This seed is what prompted us to investigate probabilistic approaches to bounding the number of minimal cutsets, ultimately leading to the present work. We thank Itai Benjamini for bringing the $\kappa(G)<\infty$ question to our attention in the first place. We would also like to thank the anonymous referees for their useful comments on an earlier version of this paper.  PE is grateful for the hospitality provided by ETH Zurich during this project. This project was supported by the European Research Council (ERC) under the European Union’s Horizon 2020 research and innovation program (grant agreement No 851565). FS was supported by the ERC grant Vortex (No 101043450)

\section{Background and notation}
\label{sec:notation}

In this section, we fix $G=(V,E)$ a locally finite, connected graph.

\subsection*{Paths and connectivity}
\label{sec:graph-theory}
Let  $S\subset V$, $u,v\in S$.  A {path from $u$ to $v$ in $S$}   is a finite  sequence  $\gamma=(\gamma_0,\gamma_1,\ldots,\gamma_\ell)$   of {distinct} vertices of $S$ such that $\gamma_0=u$, $\gamma_\ell=v$ and $\{\gamma_{i-1},\gamma_{i}\}\in E$ for every $i\in\{1,\ldots,\ell\}$.  When such a path exists, we say that {$u$ is connected to $v$ in $S$}. By extension, a set $A$ is said to be connected to a set $B$ in $S$ if there exists a   vertex of $A$ that is connected to a vertex of $B$ in $S$. 
A  {path from $u$ to $\infty$ in $S$} is an infinite sequence of distinct vertices $\gamma_0,\gamma_1,\ldots$ in $S$  such that $\gamma_0=u$ and   $\{\gamma_{i-1},\gamma_{i}\}\in E$ for every $i\in\{1,2,\ldots\}$.   When such a path exists, we say that $u$ is connected to $\infty$ in $S$.

\subsection*{Exposed boundary}
 Let $S\subset V$ be a finite set. The exposed boundary of $S$ is the set $\partial_\infty S$  of  all the edges $\{u,v\}$ such that  $u\in S$ and $v$ is connected to $\infty$ in $V\setminus S$. Notice that  the exposed boundary is a subset of the standard boundary defined at the beginning of Section~\ref{sec:cons-comm}: for every finite set $S\subset V$, we have $\partial_\infty S\subset \partial S$.
  
\subsection*{Percolation configurations}
\label{sec:backgr-bern-perc}
An element  $\omega\in \{0,1\}^E$ is called a  percolation configuration. Given such a configuration, an edge $e\in E$ is said to be open if $\omega(e)=1$ and closed if $\omega(e)=0$. By extension, a path is said to be open if all its edges are open. The cluster of a vertex  $u\in V$  is the connected component of $u$ in the graph $(V,\{e\in E\: : \: \omega(e)=1\})$.

\subsection*{Percolation events} A measurable subset $A\subset \{0,1\}^E$ is called a percolation event.  Given $S\subset V$ and $u,v\in S$, we denote by $u\xleftrightarrow{S} v$ the event that there exists an open path from $u$ to $v$ in $S$, and simply write $u\lr v$ when $S=V$. Finally, $u\lr\infty$ denotes the event that there exists an open path from $u$ to $\infty$ in $V$.

\subsection*{Percolation measures}
A percolation measure on $G$ is a probability measure on the product space $\{0,1\}^E$.  For $p\in [0,1]$, we denote by $\mathrm P_p$ the standard Bernoulli percolation measure, under which each edge is open with probability $p$ independently of the other edges.

\subsection*{\bf Positive association} A percolation  event $\mathcal E$  is called increasing if for all percolation configurations $\omega,\xi$ satisfying  $\omega\le \xi$  for the standard product (partial) ordering, we have  $\omega\in \mathcal E\implies \xi\in \mathcal E$. Typical examples  of increasing events are the connection events (such as $u\xleftrightarrow{S} v$) introduced above. A percolation measure $\mathrm P$ is said to be {positively associated} if 
\begin{equation}
  \label{eq:14}
  \mathrm P[\mathcal E\cap \mathcal F] \ge \mathrm P[\mathcal E] \mathrm P[\mathcal F] 
\end{equation}
for all increasing events $\mathcal E,\mathcal F$.  This property is often  referred to as the FKG inequality. We will use that  Bernoulli percolation $\mathrm P_p$ is positively associated (for every fixed $p\in [0,1]$) as established by  Harris \cite{Har60}.

\section{Exposed boundaries and cutsets}
\label{sec:expos-bound-cuts}

In this section, we fix $G=(V,E)$ an infinite, connected, locally finite graph. In our paper, we will use that minimal cutsets can be obtained by considering the exposed boundary of finite connected sets. In this section, we  recall some well-known  facts relating the two notions. The first elementary result  is that the exposed boundary of a finite connected set is a minimal cutset.

\begin{lemma}\label{lem:1}
  Let $S\subset V$ be a finite connected set. For every $u\in S$, $\partial_\infty S$ is a minimal cutset from $u$ to $\infty$.
\end{lemma}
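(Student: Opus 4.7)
The claim has two parts: $\partial_\infty S$ is a cutset from $u$ to $\infty$, and it is minimal. I would treat them in that order, and the whole argument should reduce to unwinding the definition of $\partial_\infty S$ using only that $G$ is connected and locally finite and that $S$ is finite and connected.

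For the cutset part, my plan is to identify explicitly the connected component of $u$ in $(V, E\setminus \partial_\infty S)$ and show it is finite. Decompose $V\setminus S$ into its connected components; call a component \emph{finite} or \emph{infinite} accordingly. Since $G$ is connected, every component of $V\setminus S$ is joined to $S$ by at least one edge, and since $S$ is finite in a locally finite graph, $\partial S$ is finite, so there are only finitely many components of $V\setminus S$. By the very definition of $\partial_\infty S$, the edges removed are exactly those from $S$ to the \emph{infinite} components of $V\setminus S$. Hence, in $(V,E\setminus \partial_\infty S)$, the component of $u$ is contained in $S$ together with all finite components of $V\setminus S$ (edges between distinct components of $V\setminus S$ do not exist). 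This is a finite set, so $\partial_\infty S$ is a cutset from $u$ to $\infty$.

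For minimality, I would fix an arbitrary edge $e=\{a,b\}\in \partial_\infty S$ with $a\in S$ and $b$ connected to $\infty$ in $V\setminus S$, and exhibit an infinite simple path from $u$ in $(V,E\setminus(\partial_\infty S\setminus\{e\}))$. The plan is to concatenate three pieces: a path from $u$ to $a$ inside $S$ (which exists because $S$ is connected), the single edge $e$, and an infinite simple path in $V\setminus S$ starting at $b$ (which exists by the definition of ``$b$ connected to $\infty$ in $V\setminus S$''). Self-avoidance is automatic because the three pieces live in the disjoint vertex sets $S$, $\{a,b\}$, and $V\setminus S$. For openness after removing $\partial_\infty S\setminus\{e\}$: edges inside $S$ and edges inside $V\setminus S$ are not in $\partial_\infty S$ at all, and the only boundary edge used is $e$ itself, which has been reinstated. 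This shows $\partial_\infty S\setminus\{e\}$ is not a cutset, so $\partial_\infty S$ is minimal.

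I do not anticipate a serious obstacle; the only point that needs a moment of care is the structural fact that no edge of $G$ joins two distinct components of $V\setminus S$, which is what allows us to conclude that the cluster of $u$ after deletion cannot leak into an infinite component. Everything else is just bookkeeping with the definitions.
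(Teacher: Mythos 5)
Your proof is correct. The minimality half is essentially identical to the paper's: pick $e=\{a,b\}\in\partial_\infty S$, concatenate a path $u\to a$ inside $S$, the edge $e$, and an infinite ray from $b$ in $V\setminus S$, and observe that no edge of $\partial_\infty S\setminus\{e\}$ is used.

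For the cutset half, you take a genuinely different route. The paper argues directly with paths: any infinite simple path from $u$ must leave $S$ for good at some point (since $S$ is finite), and the edge where it last exits $S$ lies in $\partial_\infty S$; hence no such path survives in $(V,E\setminus\partial_\infty S)$. You instead classify the components of $V\setminus S$ as finite or infinite, note that only finitely many exist (each must touch the finite set $\partial S$ because $G$ is connected), and show the component of $u$ after deletion is trapped in $S$ together with the finite components, which is a finite set. Your version is a bit longer and leans on two auxiliary observations (finiteness of the number of components, and that a vertex of $V\setminus S$ lies in an infinite component iff it is connected to $\infty$ in $V\setminus S$, which uses local finiteness via K\"onig's lemma), but it has the virtue of explicitly exhibiting the finite component of $u$, which the paper's shorter argument does not. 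Both are sound; the paper's is the more economical, yours is the more structural. One small phrasing nit: $\{a,b\}$ is not disjoint from $S$ or from $V\setminus S$, so ``the three pieces live in disjoint vertex sets'' is not literally what you mean; what makes the concatenation simple is that the $u\to a$ path lies in $S$ and the ray from $b$ lies in $V\setminus S$, and these two sets are disjoint.
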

\begin{proof}
 Any path from $u$ to $\infty$ in $V$ must traverse an edge in $\partial_\infty S$ (consider the last edge traversed by this path intersecting $S$). Therefore, $\partial_\infty S$ is a cutset from $u$ to $\infty$.   To prove that it is minimal, consider an edge $e \in \partial_\infty S$. Since $S$ is connected, there exists a path from $u$ to an endpoint of $e$ in $S$ and by definition of the exposed boundary, there must exist a path from the other endpoint of $e$ to~$\infty$ in $V\setminus S$. The concatenation of these two paths with $e$ connects $u$ to $\infty$ without using any  edges of $\partial_\infty S$ other than $e$. Hence $\partial_\infty S\setminus\{e\}$ is not a cutset from $u$ to $\infty$.  
\end{proof}

The second elementary result identifies the exposed boundary under some simple conditions.

\begin{lemma}\label{lem:2}
  Let $u\in V$, let $\Pi$ be a minimal cutset from $u$ to $\infty$. Let $A$ be the connected component of $u$  in $(V,E\setminus \Pi)$ and $B=\{e\cap A,\, e\in \Pi\}$  be the set of inner vertices of $\Pi$. For every set $S$ of vertices, we have
  \begin{equation}
    \label{eq:1}
    (B \subset S \subset A) \implies (\partial_\infty S= \Pi).
  \end{equation}
  \end{lemma}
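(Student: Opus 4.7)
The plan is to prove the two inclusions $\partial_\infty S \subset \Pi$ and $\Pi \subset \partial_\infty S$ separately, leveraging the minimality of $\Pi$ and the finiteness of $A$. The key preliminary observation is that for every $e=\{a,b\}\in\Pi$, exactly one endpoint lies in $A$ (this is the inner vertex, which belongs to $B$) and the other does not. This uses minimality as follows: since $\Pi\setminus\{e\}$ is not a cutset from $u$ to $\infty$, there exists a path from $u$ to $\infty$ using edges in $E\setminus(\Pi\setminus\{e\})$; since $\Pi$ is a cutset, this path must cross $e$. The portion before $e$ exhibits one endpoint (call it $a$) in $A$, and the portion after $e$ is an infinite $\Pi$-avoiding path starting at the other endpoint $b$. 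This infinite path cannot re-enter $A$, for otherwise $b$ itself would lie in $A$, making $A$ infinite and contradicting that $\Pi$ is a cutset from $u$ to $\infty$. Hence the path stays in $V\setminus A$.

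For the inclusion $\Pi\subset\partial_\infty S$, I take $e=\{a,b\}\in\Pi$ as above. By hypothesis $a\in B\subset S$ and, since $S\subset A$, the $\Pi$-avoiding infinite path from $b$ constructed above lives entirely in $V\setminus A\subset V\setminus S$. Thus $b$ is connected to $\infty$ in $V\setminus S$, which shows that $e\in\partial_\infty S$.

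For the reverse inclusion $\partial_\infty S\subset\Pi$, I take $e=\{x,y\}\in\partial_\infty S$, with $x\in S$ and an infinite path $\gamma$ from $y$ to $\infty$ lying in $V\setminus S$. I argue by contradiction and assume $e\notin\Pi$. Then $y$ lies in the same component of $(V,E\setminus\Pi)$ as $x\in S\subset A$, so $y\in A$. Because $A$ is finite, $\gamma$ contains a last vertex $w\in A$ before leaving $A$ for good. The edge of $\gamma$ that exits $A$ at $w$ cannot lie in $E\setminus\Pi$ (otherwise its other endpoint would also lie in $A$), and so it lies in $\Pi$. This makes $w$ an inner vertex of $\Pi$, hence $w\in B\subset S$, which contradicts $\gamma\subset V\setminus S$.

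There is no serious obstacle; the arguments in both directions are essentially routine once the preliminary structural fact about endpoints of $\Pi$-edges is in hand. The only delicate point, and the reason the two containments $B\subset S\subset A$ appear in the hypothesis, is the dual role they play: the inclusion $S\subset A$ ensures that the infinite path escaping from the outer endpoint of a cutset edge automatically avoids $S$, while the inclusion $B\subset S$ ensures that every cutset edge has its inner vertex inside $S$ (giving $\Pi\subset\partial_\infty S$) and simultaneously forces the contradiction needed for $\partial_\infty S\subset\Pi$.
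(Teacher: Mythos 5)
Your proof is correct. The key structural fact you isolate at the start --- that each $e\in\Pi$ has exactly one endpoint in $A$, and the other endpoint admits an infinite $\Pi$-avoiding path staying in $V\setminus A$ --- is exactly what the paper also needs, but the paper derives it more economically by invoking Lemma~\ref{lem:1}: applying that lemma to $A$ and using minimality of $\Pi$ immediately gives $\partial_\infty A=\partial A=\Pi$, which packages all of this information at once. You instead re-derive it from scratch via the minimality argument (taking a path from $u$ to $\infty$ avoiding $\Pi\setminus\{e\}$ and locating the crossing of $e$), which is fine but duplicates work already done in Lemma~\ref{lem:1}.

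The two proofs of $\Pi\subset\partial_\infty S$ then coincide in substance. For $\partial_\infty S\subset\Pi$ your route differs genuinely: you argue by contradiction, assume $e\notin\Pi$, deduce $y\in A$, and use finiteness of $A$ to extract a \emph{last exit} edge of the infinite path $\gamma$ from $A$, which must lie in $\Pi$ and hence puts an inner vertex $w\in B\subset S$ on $\gamma\subset V\setminus S$. The paper argues directly: since $\gamma$ lies in $V\setminus S\subset V\setminus B$ and every edge of $\Pi$ has an endpoint in $B$, the path $\gamma$ is automatically $\Pi$-avoiding, so $v$ is connected to $\infty$ in $(V,E\setminus\Pi)$, forcing $v\notin A$ and hence $e\in\partial A=\Pi$. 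Both use the same ingredients (finiteness of $A$, the fact that $\Pi$-edges meet $B$); the paper's version avoids contradiction and the bookkeeping of a last-exit vertex, but yours is equally valid and arguably makes the role of the finiteness of $A$ more visible.
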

\begin{proof}
    Since $A$ is a maximal connected set in $(V,E\setminus \Pi)$, all the edges at the boundary of $A$ belong to $\Pi$, and therefore 
    $\partial _\infty A\subset \partial A\subset \Pi$.
    By Lemma~\ref{lem:1}, $\partial_\infty A$ is a cutset from $u$ to $\infty$, hence, by the minimality of $\Pi$,  the two inclusions above must be equalities:
    \begin{equation}
      \label{eq:5}
      \partial _\infty A =\partial A= \Pi.
    \end{equation}
    Now, let  $S$ be a set satisfying $B\subset S\subset A$. Let $e\in \Pi$. Since $e\in \partial_\infty A$, one endpoint of $e$ must belong to $B$ and the other endpoint is connected to $\infty$ in $V\setminus A$.  Therefore, by hypothesis, one endpoint of $e$ belongs to $S$ and the other endpoint is connected to $\infty$ in $V\setminus S$. This proves the inclusion
    \begin{equation}
      \label{eq:6}
      \Pi\subset \partial_\infty S.
    \end{equation}
    Let $e \in \partial_{\infty}S$. Let $u$ be the endpoint of $e$ in $S$, and let $v$ be the endpoint of $e$ connected to $\infty$ in $V \backslash S$. Then, by hypothesis, $u \in A$ and $v$ is connected to $\infty$ in $V \backslash B$. Since $\Pi = \partial A$, every edge in $\Pi$ intersects $A$ and hence intersects $B$. Therefore, there must exist an infinite path starting at $v$ in the subgraph $(V,E \backslash \Pi)$. In particular, $v \not\in A$, and hence $e \in \partial A = \Pi$. This proves that the inclusion above must be an equality.

  \end{proof}

\section{Full connectivity via positive association}
\label{sec:FKG-lemma}
In this section, we consider the following problem: Let $B$ be a finite set in a graph, and $\mathrm P$ be a percolation measure. What is the probability that all the vertices of $B$ are all connected to each other? Or, in other words, what is the probability that all the vertices of $B$ lie in the same cluster? We prove that this probability is at least exponential in the size of $B$ when the measure is positively associated, and the probability for a point to be connected to $B$ is uniformly lower bounded. This result, formally stated below, will allow us to construct random sets with a prescribed boundary.

\begin{proposition} \label{prop:fkg}
  Let $G=(V,E)$ be a finite, connected graph. Let $\mathrm P$ be a positively associated percolation  measure on $G$. Let $B \subset  V$, let $\theta,p \in (0,1]$ and suppose that  $\mathrm P(u \leftrightarrow B) \geq \theta$ for every $u \in V$, and $\mathrm P(e \text{ is open}) \geq p$ for every $e \in E$.
Then for every $o \in V$,
    \[
        \mathrm P\left( \bigcap_{b \in B} \{ o \leftrightarrow b \}  \right) \geq c^{|B|},
      \]
      where  $c := \left(\frac{p\theta}{2}\right)^{3/\theta}$.
\end{proposition}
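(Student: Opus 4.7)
The plan is to prove Proposition~\ref{prop:fkg} by induction on $|B|$, peeling off elements of $B$ one at a time at a controlled multiplicative cost. If I can find, for each inductive step, a vertex $b^* \in B$ such that $\mathrm P(o \leftrightarrow b^*)$ is bounded below by a fixed constant (without destroying the hypothesis when restricted to $B \setminus \{b^*\}$), then by FKG splitting the two increasing events
\[
    \mathrm P\Bigl( \bigcap_{b \in B} \{o \leftrightarrow b\} \Bigr) \;\geq\; \mathrm P(o \leftrightarrow b^*) \cdot \mathrm P\Bigl( \bigcap_{b \in B \setminus \{b^*\}} \{o \leftrightarrow b\} \Bigr),
\]
and iteration produces the desired bound $c^{|B|}$.

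The key technical step is to exploit the averaged hypothesis $\mathrm P(u \leftrightarrow B) \geq \theta$ to find a vertex $v$ whose cluster typically contains a substantial fraction of $B$. Averaging yields a vertex $v$ with $\sum_{b \in B} \mathrm P(v \leftrightarrow b) \geq \theta$, but naive pigeonhole applied to this sum only identifies a single $b$ with $\mathrm P(v \leftrightarrow b) \geq \theta/|B|$, losing a factor $|B|$ which is fatal. Positive association is what should rescue us: by exposing the cluster of $v$ and invoking FKG, one can promote the averaged bound to a simultaneous statement about a deterministic subset $A \subseteq B$ of size $\geq \theta |B|/3$ that is connected to $v$ through the percolation. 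One then pays a factor of $p$ per edge to force open a short deterministic path linking $o$ to $v$, and uses FKG to combine these increasing events into a lower bound of the form $(p\theta/2)^{|A|}$.

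The exponent $3/\theta$ in $c$ matches a geometric-sum accounting: if each iteration absorbs a $(\theta/3)$-fraction of the remaining elements of $B$ at a cost $(p\theta/2)^{\theta |B_k|/3}$, then summing the exponents over all iterations gives
\[
    \tfrac{\theta}{3}\, |B| \sum_{k\geq 0} \bigl(1-\tfrac{\theta}{3}\bigr)^{k} \;=\; |B|,
\]
and by allocating the cost as $(p\theta/2)^{3/\theta}$ per absorbed vertex one recovers $c^{|B|} = (p\theta/2)^{3|B|/\theta}$. The extra factor of $3$ compared with $1/\theta$ is essentially the cost of the geometric decay scheme.

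The main obstacle is exactly the amplification step: converting an averaged lower bound on the \emph{union} event $\{u \leftrightarrow B\}$ into a \emph{simultaneous} lower bound on a deterministic block of connections. This is where positive association must be used substantively, and it is also the place where the inductive bookkeeping is most delicate: after absorbing $A$, one must verify that the hypothesis $\mathrm P(u \leftrightarrow B') \geq \theta'$ still holds on the smaller target $B' = B \setminus A$ with parameters good enough for the induction to close. Once this amplification and bookkeeping are in place, the remaining ingredients—the FKG inequality, the forcing of a short path of open edges using the edge bound $\mathrm P(e)\geq p$, and the iterative recursion—should be comparatively routine.
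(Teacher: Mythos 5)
Your proposal outlines a plan rather than a proof, and the plan breaks down at exactly the step you flag as the main obstacle. The ``amplification step'' — promoting the averaged bound to the existence of a deterministic subset $A \subseteq B$ with $|A| \geq \theta|B|/3$ that is connected to $v$ with good probability — rests on the claim that the cluster of $v$ ``typically contains a substantial fraction of $B$.'' This claim is false in general: the hypothesis $\mathrm P(v \leftrightarrow B) \geq \theta$, together with $\sum_{b \in B}\mathrm P(v \leftrightarrow b) \geq \theta$, gives an expected overlap of only $\theta$ elements, not $\theta|B|$. For instance, if each $b \in B$ is connected to $v$ with probability about $\theta/|B|$ and the events are roughly independent, the cluster of $v$ typically meets $B$ in at most one point, and no deterministic subset of linear size is simultaneously reachable with exponentially good probability. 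Merely saying that FKG should rescue this does not identify a mechanism, and indeed no fixed application of FKG can manufacture a large deterministic block of simultaneous connections out of a small expected overlap.

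A second, independent gap is the cost of the ``short deterministic path linking $o$ to $v$.'' There is no bound on $\mathrm{dist}_G(o,v)$ in terms of $|B|$, $\theta$, or $p$; the graph could be an arbitrarily long path and the factor $p^{\mathrm{dist}(o,v)}$ is then uncontrolled. (And taking $v = o$, which is always allowed since the hypothesis holds for \emph{every} vertex, removes the path cost but makes the amplification step even more clearly unattainable, since nothing singles out $o$.) Finally, your inductive bookkeeping assumes that after removing $A$, one still has $\mathrm P(u \leftrightarrow B \setminus A) \geq \theta'$ for all $u$, but this weaker-target hypothesis does not follow from the original one and is not addressed.

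The paper's proof is structurally quite different and sidesteps all three issues at once. Rather than trying to absorb large chunks of $B$ by induction, it grows a single ``skeleton'' set $X = \{x_1,\dots,x_k\}$ starting from $o$, adding vertices one at a time subject to the two-sided constraint $\tfrac{p\theta}{2} \le \mathrm P(x_i \leftrightarrow \{x_1,\dots,x_{i-1}\}) \le \tfrac{\theta}{2}$, until maximality. Maximality (plus the edge hypothesis and FKG) forces $\mathrm P(u \leftrightarrow X) \ge \theta/2$ for \emph{every} $u \in V$, which solves the amplification problem: the skeleton is simultaneously reachable from anywhere with a uniform constant. A counting argument (tracking the number of clusters meeting both $X$ and $B$) bounds $|X| \le 2|B|/\theta$, replacing the uncontrolled graph-distance cost by a chain of FKG-linked connection events. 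One final FKG application connects $o$ to all of $X$, and each $b \in B$ to $X$. You would likely need to discover something like this chain construction to make your sketch into a proof; as written, the central step is not just unproved but appears to be unprovable in the form stated.
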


\begin{proof}
Say that a finite sequence of vertices $x_1,\dots,x_k$ is \emph{chained} if $x_1 = o$ and for all $i \in \{2,\dots,k\}$,
\begin{equation}
  \label{eq:17}\tag{\bf P1}
  \frac{p\theta}{2} \leq \mathrm P\left( x_i \leftrightarrow \{x_1,\dots,x_{i-1}\} \right) \leq \frac{\theta}{2}.
\end{equation}
Since there exists at least one chained sequence (take $k = 1$) and $V$ is finite, there must exist a chained sequence $x_1,\dots,x_k$ that is \emph{maximal} in the sense that for every vertex $x_{k+1}$, the sequence $x_1,\dots,x_{k+1}$ is not chained. Fix a maximal chained sequence $x_1,\dots,x_{k}$, and let $X := \{x_1,\dots,x_{k}\}$. We claim that, in addition to \eqref{eq:17}, this sequence satisfies the following two properties, where $n:= \abs{B}$:
\begin{align}
  \label{eq:19}
  \tag{\bf P2} &\forall u\in V \quad \mathrm P(u\lr X)\ge \frac{\theta}2,\\
   \tag{\bf P3} &k \leq \frac{2n}{\theta}.\label{eq:20}
\end{align}

To prove \eqref{eq:19}, consider the set of vertices $W\subset V$  that are connected to $X$ with probability at least $\theta/2$ and suppose for contradiction that $W\neq V$. Since $W$ is non empty (because $X\subset V$) and $G$ is connected, we can consider an edge $\{u,v\}$ such that $u\in W$ and $v\notin W$.     
 By positive association,
\[
    \theta/2>\mathrm P( v \leftrightarrow X ) \geq \mathrm P(\{u,v\} \text{ is open}) \cdot \mathrm P(u \leftrightarrow X) \geq \frac{p \theta}{2}.
\]
In particular, $x_1,\dots,x_{k},v$ is a chained sequence, contradicting the maximality of $x_1,\dots,x_k$.

We now prove \eqref{eq:20}. To this aim, for each $i \in \{1,\dots,k\}$, let $N_i$ denote the number of clusters that  intersect both $\{x_1,\dots,x_i\}$ and $B$.
For every $i\in \{2,\dots,k\}$,  the increment $N_{i}-N_{i-1}$ is equal to $1$ if $x_i$ is connected to $B$ but not to the previous points $\{x_1,\ldots,x_{i-1}\}$, and it is equal to $0$ otherwise. Therefore, for every $i\in\{2,\ldots,k\}$, we have the deterministic inequality  
\begin{equation}
  \label{eq:15}
  N_{i}-N_{i-1} \ge \mathbf 1_{x_i \lr B} - \mathbf 1_{x_i\lr\{x_1,\ldots, x_{i-1}\}}.
\end{equation}
Taking the expectation, using our hypothesis and \eqref{eq:17}, for every $i\in\{2,\ldots, k\}$, we get
\begin{equation}
  \label{eq:16}
  \mathrm  E(N_i) - \mathrm E(N_{i-1}) \ge \underbrace{\mathrm P(x_i \lr B)}_{\ge \theta}- \underbrace{\mathrm P(x_i\lr\{x_1,\ldots, x_{i-1}\})}_{\le\theta/2}\ge \theta/2.
\end{equation}
Summing over $i\in\{2,\ldots, k\}$ and using  $\mathrm E(N_1)=\mathrm P(x_1\lr B)\ge \theta\ge \theta/2$, we get  $\mathrm E[N_k] \geq \frac{\theta}{2}k$. Since $N_k$ is deterministically bounded above by $|B| = n$, this concludes the proof of \eqref{eq:20}.

We now explain how the three properties above of the chained sequence imply the desired lower bound in the proposition. First, we estimate the event that all the vertices of $X$ are connected to $o$: By  \eqref{eq:17}, \eqref{eq:20}  and  positive  association, we have 
\begin{equation} \label{eq:internally_connected}
    \mathrm P\left( \bigcap_{u\in X}\{o\lr u \}\right) \geq \prod_{i=2}^k \mathrm P\left( x_i \leftrightarrow \{x_1,\dots,x_{i-1} \}\right) \geq \left( \frac{p\theta}{2} \right)^{k-1} \geq \left( \frac{p \theta}{2} \right)^{\frac{2n}{\theta}}.
\end{equation}

Second, we estimate the event that all the vertices of $B$ are connected to $X$: By \eqref{eq:19} and  positive association, we have 
\[
    \mathrm P \left( \bigcap_{b\in B} \{ b \leftrightarrow X\} \right) \geq \left( \frac{\theta}{2} \right)^n.
\]
If all the vertices of $X$ are connected to $o$ and all the vertices of $B$ are connected to $X$, then all the vertices of $B$ are connected to $o$. Hence, by the two displayed equations above and positive association, we obtain
\[
    \mathrm P\left( \bigcap_{b\in B} \{ o \leftrightarrow b \}  \right) \geq\mathrm P\left( \bigcap_{u\in X}\{o\lr u \}\right)  \cdot \mathrm P\left( \bigcap_{b \in B} \{ b \leftrightarrow X \}  \right) \geq \left( \frac{p\theta}{2} \right)^{\frac{2n}{\theta}} \cdot \left( \frac{\theta}{2} \right)^n \geq c^n,
\]
where $c := \left(\frac{p\theta}{2}\right)^{3/\theta}$.
\end{proof}

\section{Proof of Theorem \ref{thm:perco->cutset}}
\label{sec:perco->cutset}

Let $G=(V,E)$ be an infinite, connected, locally finite graph. In this section, we prove Theorem~\ref{thm:perco->cutset}, in the following form. 
\begin{equation}
  \label{eq:4}
\big  (\exists p<1 \ \exists \theta>0 \ \forall u \in V \quad\mathrm P_p(u\leftrightarrow \infty)\ge \theta \big )\iff \big (\exists K<\infty \ \forall u\in V\ \forall n\ge1\quad |\mathcal Q_n(u)|\le K^n\big). 
\end{equation}
 
The implication $\Leftarrow$ is well-known, and follows from the Peierls argument \cite[Theorem 4.11]{Ben13}, which we now recall for completeness. Let $u\in V$. If the cluster of $u$ is finite, then by Lemma~\ref{lem:1}, its exposed boundary is a finite minimal cutset from $u$ to $\infty$, and all its edges are closed. Hence, by the union bound, for every $p\in [0,1]$ we have 
\begin{equation}
  \label{eq:12}
  \mathrm P_p(|C_u|<\infty)\le \sum_{n\ge 1} q_n (1-p)^n. 
\end{equation}
If $q_n\le K^n$ for some constant $K<\infty$,  then the right hand side above converges to $0$ as $p$ tends to $1$. Since the bound is uniform in $u$,  there exists $p<1$ such that
\begin{equation}
  \label{eq:13}
  \forall u\in V \quad \mathrm P_p(u\leftrightarrow \infty)\ge 1/2.
\end{equation}

We now prove the implication $\Rightarrow$. Fix $\theta,p\in(0,1)$ such that $\mathrm P_{p}(u \leftrightarrow \infty) \geq \theta$ for every $u\in V$. Fix $o\in V$ and $n\ge 1$. Writing  $C$ for the cluster of $o$, we show that  for every minimal cutset $\Pi$ from $o$ to $\infty$ with $\abs{\Pi} = n$,
\begin{equation}
  \label{eq:8}
   \mathrm P_p (\partial_{\infty} C = \Pi ) \geq 1/K^n,
\end{equation}
where $K=K(p,\theta)\in (0,\infty)$ is a finite  constant depending on $p$ and $\theta$ only (in particular it does not depend on the chosen vertex $o$). This concludes the proof since
\begin{equation}
  \label{eq:9}
1\ge    \sum_{ \Pi \in \mathcal Q_n(o) } \mathrm P_p(\partial_{\infty} C = \Pi ) \overset{~\eqref{eq:8}}\ge  |\mathcal{Q}_n(o)|  /K^n. 
\end{equation}

Let us now prove the lower bound \eqref{eq:8}. As in Lemma~\ref{lem:2}, let $A$ be the connected component of $o$ in $(V,E\setminus \Pi)$ and $B$ the set of inner vertices of $\Pi$. Since any infinite open path from a vertex $u\in A$  must intersect $B$ before exiting $A$, the  hypothesis $\mathrm P_p(u\leftrightarrow \infty)\ge \theta$  implies
\begin{equation}
  \label{eq:7}
  \forall u\in A \quad \mathrm P_p(u\xleftrightarrow{A} B)\ge \theta.
\end{equation}
Let $\mathcal E$ be the  event  that every vertex in $B$ is connected to $o$ by an open path in $A$.    By Proposition~\ref{prop:fkg} applied to the finite subgraph of $G$ induced by $A$, we have $\mathrm P_{p}(\mathcal E) \geq c^n$, where $c=(p\theta/2)^{3/\theta}>0$. Let $\mathcal F$ be the event that all the edges of $\Pi$ are closed. By independence, we have
\begin{equation}
  \label{eq:10}
  \mathrm P_p(\mathcal E\cap \mathcal F)= \mathrm P_p(\mathcal E)\mathrm P_p(\mathcal F)\ge c^n (1-p)^n.
\end{equation}
If the event $\mathcal E\cap \mathcal F$ occurs, then the cluster $C$ of $o$ satisfies $B\subset C\subset A$. Hence, by Lemma~\ref{lem:2} we must have $\partial_\infty C=\Pi$. This concludes that
\begin{equation}
  \label{eq:11}
  \mathrm P_p(\partial_{\infty} C=\Pi)\ge \mathrm P_p(\mathcal E\cap \mathcal F)\ge c^n(1-p)^n,
\end{equation}
which establishes the desired lower bound~\eqref{eq:8} with $K=\frac1{c(1-p)}=\frac1{(p\theta/2)^{3/\theta}(1-p)}$.

\section{A covering lemma for Markov chains}\label{sec:RW_lemma}

In this section, we give conditions under which a killed Markov chain survives long enough to visit every state and then return to its initial state\footnote{In fact, we lower bound the probability that this occurs in $\leq 2n-2$ steps (which is optimal), where $n$ is the number of states. Contrast this with \cite{BGM13,DK21}, both called \emph{Linear cover time is exponential unlikely}; we give conditions under which linear cover time is exponentially \emph{likely}.}. We will apply this in the next section to prove Theorem \ref{thm:transient->cutset}. Here $[n]$ denotes the set $\{1,\dots,n\}$.

\begin{lemma}\label{lem:covering}
Let $n \geq 1$. Let $P=(p_{i,j})_{i,j \in [n]}$ be a symmetric matrix of non-negative entries such that\footnote{We can think of $P$ as the transition matrix of a Markov chain which is killed at $i$ with probability $1-\sum_{j} p(i,j)$.} $\sum_{j \in [n]} p(i,j) \leq 1$ for all $i \in [n]$. Let $\Gamma$ be the set of all sequences $\gamma=(\gamma_0,\gamma_1,\ldots,\gamma_k)$ in $[n]$ (for any $k \geq 1$) with $\gamma_0=1$ such that the unique element $i\in [k]$ satisfying both $\gamma_i=1$ and $\{\gamma_0,\gamma_1,\ldots,\gamma_i\} = [n]$ is $i=k$. For every such sequence $\gamma$, define
\[
          p(\gamma) := \prod_{i=1}^k p\left(\gamma_{i-1},\gamma_i\right).
\]
For each $\eps > 0$, if every non-empty proper subset $I$ of $[n]$ satisfies
\begin{equation} \label{eq:matrix_has_good_crossing}
          \sum_{i \in I} \sum_{j \in [n] \backslash I} p(i,j) \geq \eps,
\end{equation}
then $\delta :=\frac{\eps^2}{16e^2}$ satisfies
\[
          \sum_{\gamma \in \Gamma} p(\gamma) \geq \delta^n.
\]
\end{lemma}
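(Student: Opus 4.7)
The plan is to give a probabilistic interpretation of the sum and lower bound it via an iterative, constructive argument. First, I would observe that $\sum_{\gamma \in \Gamma} p(\gamma) = \mathbb{P}_1[\tau < \infty]$, where $\tau$ is the first time the killed Markov chain with substochastic transition matrix $P$, started at state $1$, finds itself at $1$ after having visited every state in $[n]$. Thus it suffices to lower bound this probability.

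To build up such walks, I would fix any total ordering $\sigma=(v_1,\ldots,v_{n-1})$ of $[n]\setminus\{1\}$ and set $S_k:=\{1,v_1,\ldots,v_k\}$. At stage $k=1,\ldots,n-1$, I attach a ``side trip'' $u\to v_k\to u$ for some bridge vertex $u\in S_{k-1}$ to an appropriate walk inside $S_{k-1}$, contributing a factor $p(u,v_k)^2$ to the overall probability. The quantitative core is the following application of Cauchy--Schwarz to the crossing hypothesis \eqref{eq:matrix_has_good_crossing}:
\[
    \sum_{u\in S,\,v\in S^c} p(u,v)^2 \;\geq\; \frac{\bigl(\sum_{u\in S,\,v\in S^c} p(u,v)\bigr)^2}{|S|\cdot|S^c|} \;\geq\; \frac{\eps^2}{|S|\cdot|S^c|}.
\]
This bounds the total side-trip mass at stage $k$ by $\eps^2/(k(n-k))$, driving the inductive construction.

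Aggregating across the $n-1$ stages produces a product $\prod_{k=1}^{n-1}\eps^2/(k(n-k))=\eps^{2(n-1)}/((n-1)!)^2$. To reach the target $\delta^n=(\eps^2/(16e^2))^n$, I need to collect two factors of order $(n-1)!$: one from summing over the $(n-1)!$ orderings $\sigma$, and another from the combinatorial freedom in choosing bridge vertices and arranging the wandering portions between side trips. Applying Stirling's inequality $(n-1)!\geq((n-1)/e)^{n-1}$ then yields the $1/(e^2)^n$ constant and closes the estimate, with the $16=4^2$ absorbing the remaining constants.

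The main obstacle is twofold: (i) ensuring each constructed walk actually lies in $\Gamma$ (the ``$k$ is unique'' condition requires that after the last new vertex has been visited, the walk does not revisit $1$ except at the very end); and (ii) controlling the wandering segments between side trips so they do not dissipate the probability while contributing the needed combinatorial multiplicity. Both should be handled by exploiting the symmetry of $P$, which yields full reversibility of the chain: pieces of walks may be freely concatenated and reversed, and side trips inserted at any visited vertex. Carrying out this bookkeeping carefully is what I expect to be the most technical part of the proof.
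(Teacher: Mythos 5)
Your probabilistic interpretation $\sum_{\gamma\in\Gamma}p(\gamma)=\mathbb{P}_1(\tau<\infty)$ is correct, and the Cauchy--Schwarz observation that the crossing hypothesis gives $\sum_{u\in S,v\in S^c}p(u,v)^2\geq \eps^2/(|S||S^c|)$ is a nice idea, but the aggregation step has a genuine gap that I do not think can be repaired in the way you describe.

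The first problem is a misalignment between your constructive setup and the Cauchy--Schwarz bound. You fix the ordering $\sigma=(v_1,\dots,v_{n-1})$ in advance, so at stage $k$ the ``side-trip mass'' is $\sum_{u\in S_{k-1}}p(u,v_k)^2$ with $v_k$ \emph{fixed}. The crossing hypothesis gives no lower bound on this quantity; the Cauchy--Schwarz estimate requires summing over \emph{both} $u\in S_{k-1}$ and $v\in S_{k-1}^c$. If instead you sum over the new vertex dynamically, then the sum over all orderings $\sigma$ is already subsumed into the product $\prod_{k=1}^{n-1}\eps^2/(k(n-k))=\eps^{2(n-1)}/((n-1)!)^2$, and there is no additional factor of $(n-1)!$ to be collected from ``summing over orderings''. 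Similarly, the ``combinatorial freedom in choosing bridge vertices'' is also already consumed by the Cauchy--Schwarz sum over $u\in S$. So the two factors of $(n-1)!$ you announce needing are not actually available through these mechanisms.

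The second and more serious problem is that the map from your choice sequences (bridges, new vertices, insertion points) to walks in $\Gamma$ is many-to-one, and the multiplicity is \emph{not} a gain but a loss. Consider the walk $\gamma=(1,v_1,1,v_2,1,\dots,v_{n-1},1)$: it is generated by roughly $(n-1)!$ distinct choice sequences (any order of inserting the $n-1$ loops off $1$), so the sum over choice sequences overcounts $p(\gamma)$ by a factor of order $(n-1)!$. Grouping your total side-trip weight $W_{n-1}\geq\eps^{2(n-1)}/((n-1)!)^2$ by walks and dividing by this multiplicity pushes the bound in the \emph{wrong} direction, to roughly $\eps^{2(n-1)}/((n-1)!)^3$, rather than improving it towards $\delta^n$. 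To reach the target you would need to beat the $((n-1)!)^2$ in the denominator, which is superexponential in $n$, so this cannot be absorbed into a per-step constant via Stirling; the bookkeeping you defer as ``the most technical part'' is in fact the whole obstruction.

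For comparison, the paper avoids these issues entirely by a different mechanism: it samples $2n-2$ iid random ``edges'' with $\mathbb P(e_\ell=(u,v))=p(u,v)/n$, shows the two halves each form a connected spanning multigraph with probability at least $(\eps/e)^n$ (here the factor $n!/n^n\geq e^{-n}$ is what tames the combinatorics), and then uses the fact that a multigraph with two edge-disjoint spanning trees contains a spanning Eulerian subgraph to produce a closed covering walk. A union bound with $\mathbb{P}(\gamma\text{ present})\leq 4^{2n}p(\gamma)$ then gives the claim with only exponential losses. Your approach, by contrast, pays a factorial cost $1/((n-1)!)^2$ up front and I do not see a compensating factorial gain hidden in the construction.
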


\begin{proof}
Let $e_1,\dots,e_{2n-2} \in [n]^2 \sqcup \{\emptyset\}$ be an iid sequence of random variables such that for all $u,v \in [n]$,
\[
          \mathbb P\left( e_1 = (u,v) \right) =\frac{p(u,v)}{n}.
\]
Such random variables exist because these probabilities sum to at most 1. Let $H$ be the \emph{undirected} multigraph with vertex set $[n]$ and edges $e_1,\dots,e_{2n-2}$. Even though $[n]^2$ consists of \emph{ordered} pairs, we think of each $e_i \in [n]^2$ as encoding an \emph{undirected} edge, loops allowed. (When $e_i = \emptyset$, we simply do not include an edge.)

Consider the iid spanning subgraphs $H_1$ and $H_2$ of $H$ that contain only the edges $e_1,\dots,e_{n-1}$ and $e_{n},\dots,e_{2n-2}$, respectively. We will lower bound the probability that each of these graphs is connected. Consider any $k \in [n-1]$. Suppose that we are given all of the connected components $C_1,\dots,C_r$ of the spanning subgraph of $H$ that contains only the edges $e_1,\dots,e_{k-1}$. If $r \geq 2$, then the conditional probability that $e_{k}$ connects two of these components is
\[
          \sum_{z=1}^r \sum_{ i\in C_z} \sum_{ j \in [n] \backslash C_z} \frac{p(i,j)}{n} \overset{\eqref{eq:matrix_has_good_crossing}}\geq \frac{r \eps}{n}.
\]
Therefore by induction on $k$, and by using the elementary bound $\frac{n^n}{n!} \leq e^n$ in the third inequality,
\begin{equation} \label{eq:H_1_is_connected}
          \mathbb P\left( H_1 \text{ is connected} \right) \geq \prod_{r=2}^{n} \frac{r \eps}{n} \geq \frac{n! \cdot \eps^n}{n^{n} } \geq \frac{\eps^n}{e^{n}}.
\end{equation}

Let $\gamma=(\gamma_0,\gamma_1,\ldots,\gamma_k)$ be a sequence in $\Gamma$. Say that $\gamma$ is \emph{present} if there exists an injection $\sigma : [k] \to [2n-2]$ such that for every $i \in [k]$, we have $e_{\sigma(i)} = (\gamma_{i-1},\gamma_i)$ or $(\gamma_i,\gamma_{i-1})$. Assume that $k \leq 2n-2$, and note that $\gamma$ cannot be present otherwise. There are at most $(2n-2)^k$ choices of $\sigma$, and given $\sigma$, for each $i$, the probability that $e_{\sigma(i)} = (\gamma_{i-1},\gamma_i)$ is the same as the probability that $e_{\sigma(i)}=(\gamma_i,\gamma_{i-1})$, both given by $\frac{1}{n}p( \gamma_{i-1},\gamma_i ) = \frac{1}{n}p( \gamma_i,\gamma_{i-1} )$. So by a union bound,
\begin{equation} \label{eq:u_unlikely_to_be_present}
          \mathbb P\left( \gamma \text{ is present} \right) \leq (2n-2)^k \prod_{i=1}^k \frac{2 }{n} p\left( \gamma_{i-1},\gamma_i \right) \leq 4^k p(\gamma) \leq 4^{2n}p(\gamma).
\end{equation}
On the other hand, when $H_1$ is connected and $H_2$ is connected, then \emph{some} $\gamma \in \Gamma$ must be present in $H$ because every multigraph that contains two edge-disjoint spanning trees must also contain a spanning subgraph that is connected and Eulerian \cite[Corollary 2.3A]{Cat92}\footnote{This general fact can be proved directly as follows: Let $E$ be the set of edges in the multigraph, and let $T_1$ and $T_2$ be the two trees. Let $o_1,\dots,o_{k}$ be the vertices that have odd degree in $T_1$. Since the sum of the degrees of all of the vertices in a given graph is always even, we can write $k=2l$ for some non-negative integer $l$. For each $i \in [l]$, pick a path $P_i$ in $T_2$ from $o_{2i-1}$ to $o_{2i}$. Then, viewing $T_1$ and each $P_i$ as elements of the $\mathbb Z/2 \mathbb Z$-vector space $\{0,1\}^{E}$, the required subgraph is given by $T_1 + P_1 + \dots + P_l$.}.
Thanks to \eqref{eq:H_1_is_connected}, this occurs with probability at least $\eps^{2n}/e^{2n}$. So by a union bound,
\begin{equation} \label{eq:some_u_must_be_present}
         \frac{\eps^{2n}}{e^{2n}} \leq \sum_{\gamma \in \Gamma} \mathbb P(\gamma \text{ is present}) \leq 4^{2n} \sum_{\gamma \in \Gamma} p(\gamma).
\end{equation}
The conclusion follows by rearranging. 
\end{proof}

\section{Proof of Theorem \ref{thm:transient->cutset}}
\label{sec:transient->cutset_RW}

Let $G=(V,E)$ be an infinite, connected, locally finite graph such that for some constant $\eps > 0$, for every vertex $v \in V$, the simple random walk $(X_t)_{t=0}^{\infty}$ on $G$ started at $v$ satisfies
\[
           d_v \mathbb P_v(\forall  t\ge 1: \ X_t\neq v) \geq \eps.
\]
Let $G'=(V',E')$ be the graph\footnote{This construction is a technicality that is only necessary if $G$ has unbounded vertex degrees.} obtained from $G$ by replacing each edge by a path of length 2. View $V$ as a subset of $V'$, and let $m: E \to V'$ map each edge to its midpoint. Let $\mathbb P_u'$ be the law of simple random walk in $G'$ started from a given vertex $u$, and let $\tau := \sup\{ t \geq 0 : X_t = X_0 \}$. We claim that for all $z \in V'$,
\begin{equation} \label{eq:uniform_transience_hypothesis}
          d_z \mathbb P_z'(\tau =0 ) \geq \eps_1:=\frac{2\eps}{4+\eps}.
\end{equation}
This is trivial when $z \in V$, even with $\eps_1 = \eps/2$, because simple random walk on $G'$ induces lazy simple random walk on $G$. Otherwise, when $z = m(\{u,v\})$ for some $\{u,v\} \in E$, this follows from the corresponding bounds for $u$ and $v$ by rearranging the following elementary calculation, where $\ell_x := |\{t \geq 0 : X_t = x\}|$:
\[\begin{split}
    \frac{1}{\mathbb P_z'(\tau = 0)} = \sum_{n \geq 0}\mathbb P_z'(\tau > 0)^n = \mathbb E_z'[\ell_z] &= \sum_{t \geq 0} \mathbb P_z'(X_t = z)\\ &= 1 + \sum_{t \geq 1}\left[
    \mathbb P_z'(X_{t-1} = u) \cdot \frac{1}{d_u} + \mathbb P_z'(X_{t-1} = v) \cdot \frac{1}{d_v}\right]\\
    &= 1 +  \frac{\mathbb E_z'[\ell_u]}{d_u} + \frac{\mathbb E_z'[\ell_v]}{d_v} \\&\leq  1 +  \frac{\mathbb E_u'[\ell_u]}{d_u} + \frac{\mathbb E_v'[\ell_v]}{d_v} = 1 + \frac{1}{d_u \mathbb P_u'(\tau = 0)} + \frac{1}{d_v \mathbb P_v'(\tau = 0)}.
\end{split}\]

Let $C := \{X_t : 0 \leq t \leq \tau\}$ and $\partial := \{ e \cap C : e \in \partial_\infty C \}$. Fix $o \in V$, and pick a neighbour $o' \in m(E)$ of $o$ in $G'$. Fix a finite minimal cutset $\Pi$ from $o$ to $\infty$ in $G$, and set $n := \abs{\Pi}$. We will show that for some finite constant $K=K(\eps) \in (0,\infty)$ depending only on $\eps$,
\begin{equation} \label{eq:lower_bound_on_particular_random_walk_output}
          \mathbb P_{o'}' \left( \partial = m(\Pi) \right) \geq 1/K^n.
\end{equation}
This implies that $\kappa(G) < \infty$ because for all $o \in V$ and $n \geq 1$,
\[
          1 \geq \sum_{\Pi \in \mathcal Q_n(o)} \mathbb P_{o'}' \left( \partial = m(\Pi) \right) \overset{\eqref{eq:lower_bound_on_particular_random_walk_output}}\geq \abs{\mathcal Q_n(o)}/K^n.
\]

Let $A$ be the connected component of $o$ in $(V,E \backslash \Pi)$, let $U := m(\Pi) \cup \{o'\}$, and let $I := A \cup m( \{e \in E : e \subset A\} )$. For all $u,v \in U \cup I$, let
\[
          p(u,v) := \mathbb P_{u}'\left( \exists t \geq 1 : X_1,\dots,X_{t-1} \in I \backslash \{u\} \text{ and } X_t = v  \right).
\]
Extend this to sets of vertices by $p(L,R) := \sum_{u \in L; v \in R} p(u,v)$, and similarly, $p(u,L) := p(\{u\},L)$ and $p(L,u) := p(L,\{u\})$. We would like to apply Lemma \ref{lem:covering} to the matrix $P:=(p(u,v))_{u,v \in U}$. By time-reversing trajectories, we have $p(u,v) = p(v,u)$ whenever $d_u = d_v$, which is for example the case when $u,v \in U$. So $P$ is symmetric, and clearly the entries of $P$ are non-negative and sum to at most 1 along each row. We claim that for every non-trivial partition $U = L \sqcup R$,
\begin{equation} \label{eq:good_crossing_of_partition}
          p(L,R) \geq \eps_2 := \eps_1^2/64.
\end{equation}
Indeed, for each $x \in U \cup I$, consider the function (the unit voltage)
\[
          F(x) := \mathbb P_x'\left( \exists t \geq 0 :  X_0,\dots,X_{t-1} \not\in L \text{ and }X_t \in R \right).
\]
Given $u \in L$, there exists\footnote{If $u = o'$, take $x := o$. If $u = m(e)$ where $e \in \Pi$, take $x$ where $\{x\} = e \cap A$, which exists by Lemma \ref{lem:2}.} $x \in A$ such that $\{u,x\} \in E'$, and if $F(x) \geq 1/2$, then we are done because
\[
          p(u,R) \geq \mathbb P_u'(X_1 = x) \cdot F(x) \geq 1/2 \cdot 1/2 \geq \eps_2.
\]
In particular, we may assume that there exists $x \in A$ with $F(x) < 1/2$. By a similar argument, we may assume that there exists $y \in A$ with $F(y) > 1/2$. Since $A$ is connected in $G$, we can therefore find $\{x',y'\} \in E$ satisfying $F(x') \leq 1/2 \leq F(y')$. Let $z := m(\{x',y'\})$, which has degree 2. Note that
\[
          F(z) \geq \mathbb P_z'(X_1 = y') \cdot F(y') \geq 1/2 \cdot 1/2,
\]
and by a union bound,
\[
          F(z) \leq \sum_{n = 0}^{\infty} \mathbb P_z'(\tau > 0)^n p(z,R) = \frac{p(z,R)}{\mathbb P_z'(\tau = 0)} \overset{\eqref{eq:uniform_transience_hypothesis}}\leq \frac{p(z,R)}{\eps_1/2}.
\]
So by rearranging, $p(z,R) \geq \eps_1/8$. By a similar argument  (i.e.\! by replacing $F$ by $1-F$, which switches the roles of $L$ and $R$, and by recalling that $p(L,z)=p(z,L)$), we deduce that $p(L,z) \geq \eps_1/8$. Now \eqref{eq:good_crossing_of_partition} follows because $p(L,R) \geq p(L,z) p(z,R)$.

Therefore by Lemma \ref{lem:covering}, the event $\mathcal E$ that the random walk visits every vertex in $U$ then returns to $o'$ before exiting $U \cup I$ satisfies
\begin{equation} \label{eq:cover_without_bad_crossing}
          \mathbb P_{o'}'\left( \mathcal E \right) \geq \eps_3^{\abs{U}} \geq \eps_3^{n+1}
\end{equation}
for some constant $\eps_3 > 0$ depending only on $\eps_2$. So by Lemma \ref{lem:2} and the strong Markov property,
\begin{equation} \label{eq:sufficient_to_have_good_output}
           \mathbb P_{o'}'\left( \partial = m\left(\Pi \right) \right) \geq \mathbb P_{o'}'\left( \mathcal E \right) \cdot \mathbb P_{o'}'\left( \tau = 0 \right) \overset{\eqref{eq:uniform_transience_hypothesis},\eqref{eq:cover_without_bad_crossing}}\geq \eps_3^{n+1} \cdot \eps_1/2.
\end{equation}
By expanding the definitions of $\eps_1,\eps_2,\eps_3$ we deduce that \eqref{eq:lower_bound_on_particular_random_walk_output} holds with $K := 2^{20}/\eps^5$.

\section{Alternative proof of Theorem \ref{thm:transient->cutset} using the Gaussian free field}
\label{sec:transient->cutset_GFF}

Here we sketch an alternative, slightly less elementary proof of Theorem~\ref{thm:transient->cutset} along the lines of the proof of Theorem~\ref{thm:perco->cutset}. Let $G$ be an infinite, connected, locally finite graph that is uniformly transient. Consider the graph $\tilde{G}=(\tilde{V},\tilde{E})$ obtained by replacing each edge by a path of length $3$. Similarly to the proof in Section~\ref{sec:transient->cutset_RW}, one can prove that $\tilde{G}$ is also uniformly transient. 
Let $\varphi\in \mathbb{R}^{\tilde{V}}$ with law $\mathbb P$ be the (centered) \emph{Gaussian free field (GFF)} on $\tilde G$ -- see e.g.~\cite[Section 1.1]{BP24} for the required background and definitions. Uniform transience implies that there exists $\eps > 0$ such that $\mathrm{Var}(\varphi(x))\leq 1/\eps$ for every $x\in\tilde{V}$.

Fix $o\in V$ and let $\tilde{C}$ be the cluster of $o$ in the percolation model induced by the excursion set $\{\varphi\geq 0\}:=\{x\in \tilde{V}:~\varphi(x)\geq 0\}$. Given every edge $e$ of $G$, we associate the corresponding mid-edge $\tilde{e}$ in $\tilde{G}$, with both endpoints of degree $2$. For a subset $\Pi$ of edges in $G$, we denote by $\tilde{\Pi}$ the associated set of mid-edges in $\tilde{G}$. 
We claim that there exists $c=c(\eps)>0$, depending only on $\varepsilon$, such that for every $\Pi\in\mathcal{Q}_n(o)$,
    \begin{equation}\label{eq:GFF_bound}
        \mathbb{P} ( \partial_\infty \tilde{C}=\tilde{\Pi}) \geq c^n.
    \end{equation}
Similarly to the previous sections, Theorem~\ref{thm:transient->cutset} follows readily from \eqref{eq:GFF_bound}.

    We now proceed to prove \eqref{eq:GFF_bound}.
    Enumerate $\tilde{\Pi}$ by $\tilde{e}_i=\{x_i,y_i\}$, $1\leq i\leq n$, where $x_i$ and $y_i$ are the inner and outer endpoints, respectively.
    We first observe that, for some constant $c_1=c_1(\eps)>0$,
    \begin{equation}\label{eq:exp_closed_GFF}
        \mathbb{P}(\varphi(y_i)\in [-2,-1] \text{ and } \varphi(x_i)\in [1,2]~~\forall~ 0\leq i\leq n)\geq c_1^n.
    \end{equation}
    Indeed, this follows by successively demanding the desired event at each vertex. Here we use the Markov property of the GFF (see \cite[Theorem 1.10]{BP24}) and the fact that the conditional variance of the next vertex given the previous ones is between $1/2$ (since they have degree $2$) and $1/\eps$, while the conditional mean remains bounded between $-2$ and $2$.

    Let $\mathcal{F}$ be the event in \eqref{eq:exp_closed_GFF} and $A$ be the component of $o$ in  $(\tilde{V},\tilde{E}\setminus \tilde{\Pi})$. Notice that 
    \begin{equation*}
        \mathbb{P}(\partial_\infty \tilde{C}=\tilde{\Pi})\geq \mathbb{P}(\mathcal{F}) \mathbb{P}\left( \bigcap_{i=1}^n \{ 0 \xleftrightarrow{ \{\varphi\geq0\}\cap A } x_i \} \,\Big|\, \mathcal{F}  \right) \geq c_1^n \mathbb{P}\left( \bigcap_{i=1}^n \{ 0 \xleftrightarrow{ \{\varphi\geq0\}\cap A } x_i \} \,\Big|\, \mathcal{F} \right).
    \end{equation*}
    By the Markov property, conditionally on $\mathcal{F}$, the process $\{\varphi\geq 0\}\cap A$ stochastically dominates $\{\varphi_A\geq -1\}$, where $\varphi_A$ is the centered GFF on $A$ (i.e.~associated to the random walk on $A$ killed when reaching $\partial A=\{x_1,\dots,x_n\}$). Therefore, it is enough to prove that, for some constant $c_2=c_2(\eps)>0$,
    \begin{equation}\label{eq:exp_connection_GFF}
        \mathbb{P} \left( \bigcap_{i=1}^{n} \{ o \xleftrightarrow{\{\varphi_A\geq -1\}} x_i \} \right) \geq c_2^n.
    \end{equation}
    Indeed, since the GFF is positively associated (see \cite[Theorem 3.38]{BP24}), the desired inequality \eqref{eq:exp_connection_GFF} follows readily from Proposition~\ref{prop:fkg} and the following inequality
    \begin{equation}\label{eq:GFF_percolates}
        \mathbb{P}( u \xleftrightarrow{\{\varphi_A\geq -1\}} \partial A)\geq \mathbb{E}(\text{sgn}(\varphi_A(u)+1) )\geq c_3,
    \end{equation}
    for some constant $c_3=c_3(\eps)>0$. The latter follows easily from the Markov property of the GFF. Indeed, let $\mathcal{S}$ be the union of all clusters of $\{\varphi_A\geq -1\}$ intersecting $\partial A$ and note that its closure $\overline{\mathcal{S}}$ (i.e.~the union of $S$ with its neighbours) is a stopping set. Clearly, one has $\text{sgn}(\varphi_A(u)+1)=1$ almost surely on the event $\mathcal{G}:=\{u \xleftrightarrow{\{\varphi_A\geq -1\}} \partial A\}=\{u\in \mathcal{S}\}$. On the complementary event $\mathcal{G}^c$ and conditionally on the field on $\overline{\mathcal{S}}$, the Markov property implies that we have a GFF on $A\setminus \mathcal{S}$ with boundary conditions $<-1$. In particular, $\text{sgn}(\varphi_A(u)+1)$ has a negative conditional expectation on $\mathcal{G}^c$. These observations readily imply the first inequality of \eqref{eq:GFF_percolates}. The second inequality follows from the fact that the variance of $\varphi_A(u)$ is at most $1/\eps$.

\end{document}